\documentclass[11pt]{amsart}
\usepackage{amsmath, amssymb,mathabx}
\usepackage[colorlinks=true,linkcolor=blue,urlcolor=blue,citecolor=blue, pagebackref]{hyperref}
\usepackage[alphabetic]{amsrefs}
\usepackage{ color}
\topmargin=-30pt
 \textheight=676pt
\textwidth=500pt
\oddsidemargin=-3pt   \evensidemargin=-3pt

\newtheorem{thm}{Theorem}[section]

\newtheorem{lem}[thm]{Lemma}



\theoremstyle{definition}



\numberwithin{equation}{section}




\newcommand\Spec{\operatorname{Spec}}

\newcommand\fl{{fl}}

\newcommand{\op}{\operatorname}
\newcommand\Pic{\operatorname{Pic}}

\newcommand\tensor{\otimes}

\newtheorem{theorem}{Theorem}[section]
\newtheorem{remark}[theorem]{ Remark}

\newtheorem{question}[theorem]{Question}
\newtheorem{proposition}[theorem]{Proposition}

\newtheorem{lemma}[theorem]{Lemma}
\newtheorem{clm}[theorem]{Claim}

\input{xypic}

\begin{document}
\title[Principal bundles on families of curves]{Triviality properties
  of principal bundles on singular curves-II} \author{Prakash Belkale and Najmuddin Fakhruddin}

\begin{abstract}
  For $G$ a split semi-simple group scheme and $P$ a principal
  $G$-bundle on a relative curve $X\to S$, we study a natural
  obstruction for the triviality of $P$ on the complement of a
  relatively ample Cartier divisor $D \subset X$. We show, by
  constructing explicit examples, that the obstruction is nontrivial
  if $G$ is not simply connected but it can be made to vanish, if $S$ is the spectrum of a dvr (and  some other hypotheses),   by a faithfully flat base change. The vanishing of
  this obstruction is shown to be a sufficient condition for etale
  local triviality if $S$ is a smooth curve, and the singular locus of
  $X-D$ is finite over $S$.
\end{abstract}
\maketitle

\section{Introduction}
Let $f:X\to S$ be a proper, flat and finitely presented curve over an
arbitrary scheme (over $\operatorname{Spec}(\Bbb{Z})$) $S$. Let $G$ be
a split reductive group scheme over $\operatorname{Spec}(\Bbb{Z})$,
base changed to $S$, and $B$ a Borel subgroup of $G$. Let $D\subset X$
be a relatively ample effective Cartier divisor which is flat over
$S$, and set ${U}=X\setminus D$. Generalizing results of Drinfeld and
Simpson \cite{DS} for the case of smooth $f$, the following result was
proved in \cite[Theorem 1.4]{BF} without any conditions on the
singularities of $f$:
\begin{theorem}\label{triv3}
  Let $P$ be a principal $G$-bundle on $X$ with $G$ semisimple and
  simply connected. Then, after a surjective \'{e}tale base change
  $S'\to S$, $P$ is trivial on $U_{S'}$.
\end{theorem}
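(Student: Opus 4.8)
The plan is to reduce the statement to the case where $S$ is the spectrum of an algebraically closed field, and then over such a field to find a reduction of the structure group of $P$ to $B$ whose induced maximal torus bundle is trivial on $U$.

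First I would reduce to a geometric point of $S$. By a standard limit argument we may assume $S$ is of finite type over $\mathbb Z$. Consider the functor on $S$-schemes sending $T$ to the set of trivializations of $P|_{U_T}$, equivalently the set of sections over $U_T$ of the affine, finitely presented morphism $P|_U\to U$. This functor is represented by a smooth $S$-scheme $\mathcal T$: it is locally of finite presentation---using the relatively ample line bundle $\mathcal O_X(D)$ one writes it as an increasing union of $S$-schemes of finite type---and it is formally smooth, since along any section the relative tangent sheaf of $P|_U\to U$ pulls back to $\mathcal O_U^{\oplus\dim G}$, so the obstructions to lifting sections lie in $H^1(U_s,\mathcal O_{U_s})^{\oplus\dim G}$, which vanishes because $U\to S$ is affine (the complement of a relatively ample divisor in the proper scheme $X$). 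A smooth morphism admits a section after an \'{e}tale surjective base change onto its open image, so it suffices to prove that $\mathcal T\to S$ is surjective, i.e.\ that $P|_{U_{\bar s}}$ is trivial for every geometric point $\bar s$ of $S$. Thus we may assume $S=\Spec k$ with $k$ algebraically closed.

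Over $k$, fix a maximal torus $T\subset B$. It then suffices to find a reduction $P_B$ of $P$ to $B$ such that the induced $T$-bundle $P_T:=P_B\times^B T$ is trivial on $U$. Indeed, $P_B\to P_T$ is a torsor under a unipotent group scheme over $P_T$ which is filtered with successive quotients the line bundles attached to the positive roots of $(G,T)$; over $U$, where $P_T$ is trivial, these quotients are trivial, so the group scheme is a successive extension of copies of $\mathbb G_{a}$, and since $U$ is an affine curve every such torsor is trivial (as $H^1(U,\mathcal O_U)=0$). Hence $P_B|_U$, and therefore $P|_U=P_B|_U\times^B G$, is trivial. To produce $P_B$ one first needs that $P$ admits \emph{some} reduction to $B$: for smooth $X$ this is the theorem of Drinfeld--Simpson, while for our possibly singular and reducible $X$ it requires a separate analysis of $B$-structures along the singular locus of $X$ and along the non-reduced part of $D$. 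Given a reduction, one then modifies it so that the induced $T$-bundle becomes trivial on $U$. Here the hypothesis that $G$ is semisimple and \emph{simply connected} enters decisively: then $X_*(T)$ is the coroot lattice and a $T$-bundle is just a tuple of line bundles on $X$, i.e.\ an element of $\Pic(X)\otimes_{\mathbb Z} X_*(T)$, and by a Drinfeld--Simpson-type construction this class may be prescribed to be any tuple of line bundles of sufficiently negative degree, in particular a tuple trivial on $U=X\setminus D$. Concretely, this flexibility comes from presenting $G/B$ as an iterated $\mathbb P^1$-bundle over minimal parabolics, reducing to the rank-one case, where the required line subbundle---of the form $\mathcal O_X(-nD)$ for $n\gg0$---of the relevant rank-two bundle $W$ is obtained as a general section of the globally generated bundle $W(nD)$, such a section being nowhere vanishing on the curve $X$; simple-connectedness is exactly what guarantees that no residual class in $\pi_1(G)=X_*(T)/\langle\text{coroots}\rangle$ obstructs the construction.

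The main obstacle is this last step, and it has two parts. First, establishing a $B$-structure on a singular, possibly reducible curve is delicate: the standard argument rests on smoothness and semicontinuity properties of the stack of $B$-structures that degenerate over the singularities of $X$, and these must be replaced by more hands-on constructions. Second, once a $B$-structure is fixed one must verify that the modifications available along the ample divisor $D$ genuinely suffice to move the induced $T$-bundle into the subgroup of $\Pic(X)$ of line bundles trivial on $U$; it is precisely here that both the relative ampleness of $D$ and the simple-connectedness of $G$ are essential, the latter being the hypothesis whose removal motivates the present paper.
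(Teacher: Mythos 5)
The theorem you are asked to prove is not actually proved in this note: it is quoted from \cite[Theorem 1.4]{BF}, where the argument occupies the bulk of that paper. Your outline correctly reproduces the overall Drinfeld--Simpson strategy that \cite{BF} follows: reduce to a geometric fibre via the formally smooth ind-scheme of trivializations (this part of your argument is sound --- $U\to S$ is affine, so the relevant obstruction groups $H^1(U_{\bar s},\mathcal{O}^{\oplus \dim G})$ vanish), then produce a $B$-reduction whose induced $T$-bundle is a tuple of line bundles trivial on $U$ (using that $X_*(T)$ is the coroot lattice when $G$ is simply connected), and finally kill the unipotent part using $H^1(U,\mathcal{O}_U)=0$. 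So the route is the same as the cited proof's, not a different one.

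There is nonetheless a genuine gap, and you have in effect flagged it yourself. The entire content of the theorem beyond Drinfeld--Simpson is that the fibres of $X\to S$ may be arbitrary proper curves: reducible, non-reduced, with arbitrary singularities, and with $D$ passing through the singular locus. Your proposal defers precisely the two steps where this matters: (i) the existence of \emph{some} $B$-reduction of $P$ on such a curve, which you acknowledge ``requires a separate analysis,'' and (ii) the verification that the modifications available along $D$ suffice to make the induced $T$-bundle trivial on $U$, which you say ``must be replaced by more hands-on constructions.'' Neither is supplied. In particular, the key assertion in your rank-one step --- that a general section of the globally generated rank-two bundle $W(nD)$ is nowhere vanishing, so that $\mathcal{O}_X(-nD)$ embeds as a line subbundle --- is exactly where non-reducedness, embedded or multiple components, and the passage of $D$ through singular points cause trouble; handling this is the real work in \cite{BF} (compare Remark \ref{improv}, which sketches the twisting-by-$\mathcal{O}(D)$ mechanism). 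As written, your argument reduces the theorem to its own hardest special cases rather than proving it.
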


Now suppose $G$ is semisimple, but not necessarily simply connected.
and. Triviality statements similar to the above are proved in
\cite[Theorem 1.5]{BF} but with stronger hypotheses: for example, in
characteristic zero, the Cartier divisor $D$ is not allowed to pass
through the singular locus of $f$, and $D$ is also assumed to be set
theoretically a union of sections of $f$ (and some other mild
conditions).

In this note, motivated in part by the article \cite{Sol}, we study
the analogue of Theorem \ref{triv3} for non-simply connected $G$.  In
this case there is a natural obstruction to local triviality
constructed as follows:

Let $\widetilde{G}$ be the simply connected cover of $G$ and denote by
$\pi_1{(G)}$ the scheme theoretic kernel o the covering map
$\widetilde{G} \to G$. The central exact sequence of sheaves of groups
(on the fppf site of $X$),
$$1\to \pi_1(G)_X \to
\widetilde{G}_X \to {G}_X \to 1$$ gives
rise to a boundary map in  fppf cohomology 
\begin{equation} \label{bound}
H^1_{\fl}(X,G)\to
H^2_{\fl}(X, \pi_1(G)) \, .
\end{equation}

Therefore, from $P\in H^1_{\fl}(X,G)$, we get an element
$\alpha_P\in H^2_{\fl}(X, \pi_1(G))$\footnote{We note that for a
  smooth group scheme, fppf cohomology is the same as etale
  cohomology. In particular, if $|\pi_1(G)|$ is invertible in
  $\mathcal{O}_S$, we may replace fppf cohomology by etale cohomology
  throughout this paper.}. It is clear that if $P$ is trivial on $U$,
then $\alpha_P$ maps to zero in $H^2_{\fl}(U, \pi_1(G))$. Thus, for
the generalization of Theorem \ref{triv3} to hold for $P$, the
following property (L) must hold: \vspace{0.1in}
\begin{enumerate}
\item[(L)]There exists a surjective \'{e}tale morphism $S'\to S$
  such that $\alpha_P$ maps to zero in $H^2_{\fl}(U_{S'}, \pi_1(G))$.
\end{enumerate}
\vspace{0.05in} We show that this property is nontrivial: For
$G=\operatorname{PGL}(m)$, we construct principal $G$-bundles on
families of curves $X\to S$ with nodal singularities, and $D$ passing
through the singularities of $f$, where property (L), so also the
direct generalization of Theorem \ref{triv3}, fails (Proposition
\ref{one}). These examples include cases when $S$ is a smooth curve,
and $X\to S$ is a family of smooth curves degenerating to curve with a
single nodal singularity and the divisor $D$ passes through the
node. Examples for other classical groups $G$ can be constructed using
similar methods.

Even though condition (L) is not always satisfied, we show in Lemma
\ref{lem:ff} that the weaker condition (L$'$) below often holds,
e.g., when $S$ is a smooth curve and $U$ is smooth over $S$:
\vspace{0.1in}
\begin{enumerate}
\item[(L$'$)]There exists a faithfully flat morphism $S'\to S$
  such that $\alpha_P$ maps to zero in $H^2_{\fl}(U_{S'}, \pi_1(G))$.
\end{enumerate}
\vspace{0.05in}
We are thus faced with:
\begin{question}
  Does condition (L$'$) always hold? If so, does there always exist a
  faithfully flat morphism $S' \to S$ such that $P$ becomes trivial on
  $U_{S'}$?
\end{question}
 
We do not know the answer to this in full generality. However, we
prove (Theorem \ref{thm:dim1}):
\begin{theorem} Let $f:X \to S$ be flat projective
  curve, $D \subset X$ a relatively ample Cartier divisor which is
  flat over $S$ and set $U = X \backslash D$. Let $G$ be a semisimple
  group and let $P$ be a principal $G$-bundle on $X$. Assume further
  that
\begin{enumerate}
\item $S$ is an excellent regular (purely) one dimensional scheme and
\item $U$ is smooth over $S$.
\end{enumerate}
Then there is a faithfully flat morphism $S' \to S$ such that $P$
becomes trivial on $U_{S'}$.
\end{theorem}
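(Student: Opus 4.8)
The plan is to first secure condition (L$'$) --- this is the substance, and is what Lemma \ref{lem:ff} provides under the present hypotheses --- and then to pass to the simply connected cover and invoke Theorem \ref{triv3}.

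\emph{Step 1 (condition (L$'$)).} By Lemma \ref{lem:ff}, whose proof applies in the present generality ($U/S$ smooth, $S$ excellent regular of dimension one), there is a faithfully flat $S_1\to S$ with $\alpha_P$ mapping to $0$ in $H^2_{\fl}(U_{S_1},\pi_1(G))$; I recall the idea, as this is the crux. Since $G$ is split semisimple, $\pi_1(G)$ is a finite diagonalizable group scheme, i.e., a product of groups $\mu_n$, so by the Kummer sequence it is enough to annihilate the images of $\alpha_P$ in $\Pic(U_{S_1})/n$ and in $H^2_{\fl}(U_{S_1},\mathbb{G}_m)[n]$ (the Brauer-group part). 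Over the generic point $\eta$ of a connected component of $S$, the fibre $U_\eta$ is a smooth affine curve over $k(\eta)$, and over $\overline{k(\eta)}$ both of these groups vanish --- Tsen's theorem for the Brauer part, and divisibility of the Picard group of a smooth affine curve over an algebraically closed field for the Picard part --- so $\alpha_P$ already dies on $U_{\overline{k(\eta)}}$, hence by a limit argument over a finite extension of $k(\eta)$. The main obstacle is to achieve this \emph{simultaneously over the closed fibres} by a single faithfully flat (indeed finite) base change: here regularity of $S$, smoothness of $U$, and excellence are used to pass to completions, to extract the needed $n$-th roots of line bundles and split the relevant Azumaya algebras there, and then to descend the result to an honest finite cover of $S$ via N\'eron--Popescu desingularization. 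For $S$ one-dimensional this can be reduced, by an open-locus argument and quasi-compactness of $S$, to the case of an excellent discrete valuation ring, which is handled earlier in the paper.

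\emph{Step 2 (from (L$'$) to triviality).} Replacing $S$ by $S_1$, we may assume $\alpha_P=0$ in $H^2_{\fl}(U,\pi_1(G))$. This class is exactly the obstruction to lifting $P|_U$ along the central isogeny $\widetilde G\to G$, so $P|_U$ lifts to a $\widetilde G$-bundle $\widetilde Q$ on $U$, where $\widetilde G$ is the simply connected cover. To apply Theorem \ref{triv3} I need a $\widetilde G$-bundle on all of $X$, so the remaining point is to extend $\widetilde Q$ across $D$ to a $\widetilde G$-bundle $\widetilde P$ on $X$ --- equivalently, to modify $P$ along $D$ to a $G$-bundle $P'$ with $P'|_U\cong P|_U$ and $\alpha_{P'}=0$ on all of $X$ --- after a further faithfully flat base change if needed. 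This rests on a purity statement for torsors under a smooth affine group across a Cartier divisor (ultimately, freeness of reflexive modules over two-dimensional regular local rings); carrying it out along the singular locus of $X$, which lies in $D$, while keeping the ambient curve flat and the divisor flat and relatively ample over $S$, is the one remaining technical matter. Theorem \ref{triv3} applied to $\widetilde P$ then produces a surjective \'etale $S'\to S_1$ over which $\widetilde P$, hence $P$, is trivial on $U_{S'}$; the composite $S'\to S$ is faithfully flat, which completes the proof.
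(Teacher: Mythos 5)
Your Step 1 is essentially the paper's: Lemma \ref{lem:ff} (whose actual mechanism is Lemma \ref{lem:pure} plus a ramified finite base change killing the Chern classes of the components of the closed fibre --- no N\'eron--Popescu input is needed) gives the faithfully flat $S_1\to S$ with $\alpha_P$ dying on $U_{S_1}$, and this matches the paper.

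Step 2, however, has a genuine gap exactly at the point you flag as ``the one remaining technical matter.'' Extending the lifted $\widetilde G$-torsor $\widetilde Q$ from $U$ across $D$ is \emph{not} a purity statement: purity (Colliot-Th\'el\`ene--Sansuc, or reflexivity of modules over $2$-dimensional regular local rings) extends torsors across closed subsets of codimension $\ge 2$, whereas $D$ is a divisor. Extension of a torsor under a non-abelian group across a divisor is obstructed in general (already for $\mathrm{SL}(m)$: the reflexive extension of the underlying vector bundle has determinant $\mathcal O(\sum a_iD_i)$, which need not admit an $m$-th root), and here the situation is worse because $X$ may be singular precisely along $D$, so even the reflexive-module argument is unavailable. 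The paper circumvents this entirely in Proposition \ref{afterT}: after an \'etale base change it reduces $P$ on all of $X$ to a maximal torus $H$ (the Drinfeld--Simpson $B$-reduction argument of \cite{BF}), lifts only the resulting $H$-bundle on $U$ to an $\widetilde H$-bundle, and extends \emph{that} --- which is just extending line bundles --- over a resolution $\widetilde X\to X$ that is an isomorphism over $U$ and is regular near $\widetilde X-U$. It then checks that $\widetilde X-U$ still supports a relatively ample (possibly non-effective) Cartier divisor and applies the strengthened form of Theorem \ref{triv3} recorded in Remark \ref{improv}, which allows non-effective divisors with components not flat over $S$. Without some substitute for this torus-reduction-plus-resolution step, your proposed route from condition (L$'$) to triviality does not go through.
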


We also show in Proposition \ref{extensio} that for $G=\op{PGL}(m)$,
and $P$ satisfying a condition weaker than (L), and $S$ smooth, $P$
lifts to a principal $\op{GL}(m)$-bundle on $X$ (after an \'{e}tale
base change in $S$). This result generalizes to arbitrary groups, see
Remark \ref{marigold}.

\section{Consequences of condition (L)}

\subsection{The case $S$ is regular of dimension one}

\begin{lemma} \label{lem:pure} Let $U$ be a regular Noetherian scheme
  and $D \subset U$ a closed subscheme. Let $V = U \backslash D$ and
  let $\{D_i\}$ be the irreducible components of $D$ of codimension
  one.  Then for any integer $n > 0$, the kernel of the restriction
  map $H^2_{\fl}(U, \mu_n) \to H^2_{\fl}(V, \mu_n)$ is the subgroup
  generated by the first Chern classes of all $\mathcal{O}_U(D_i)$.
\end{lemma}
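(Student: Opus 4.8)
The plan is to compare $H^2_{\fl}$ of $U$ and of $V$ through the Kummer sequence, which reduces the statement to the classical analogue for Picard (divisor class) groups together with the injectivity of the Brauer group under restriction to a dense open. We may assume $U$ is connected, hence integral with function field $K$; the case $D=U$ is vacuous, so assume $D\subsetneq U$, whence $V$ is a dense (irreducible) open in $U$. First I would apply the Kummer sequence $1\to\mu_n\to\mathbb{G}_m\xrightarrow{n}\mathbb{G}_m\to 1$ on the fppf sites of $U$ and of $V$. Using Grothendieck's comparison $H^i_{\fl}(-,\mathbb{G}_m)=H^i_{\mathrm{et}}(-,\mathbb{G}_m)$ (so $H^1_{\fl}(-,\mathbb{G}_m)=\Pic$ and $H^2_{\fl}(-,\mathbb{G}_m)$ is the cohomological Brauer group), one obtains a commutative ladder of short exact sequences
\begin{equation*}
\begin{array}{ccccccccc}
0 & \to & \Pic(U)/n & \to & H^2_{\fl}(U,\mu_n) & \to & H^2_{\fl}(U,\mathbb{G}_m)[n] & \to & 0\\
 & & \downarrow & & \downarrow & & \downarrow & & \\
0 & \to & \Pic(V)/n & \to & H^2_{\fl}(V,\mu_n) & \to & H^2_{\fl}(V,\mathbb{G}_m)[n] & \to & 0,
\end{array}
\end{equation*}
in which the vertical arrows are restriction and the left horizontal arrows are the first Chern class maps $c_1$. (This route avoids any invertibility hypothesis on $n$, which a direct appeal to absolute purity in \'{e}tale cohomology would need.)

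Next I would run the snake lemma on this ladder. The key input is that $H^2_{\fl}(U,\mathbb{G}_m)\to H^2_{\fl}(V,\mathbb{G}_m)$ is injective: since $U$ is regular and integral, $H^2_{\fl}(U,\mathbb{G}_m)$ injects into $\operatorname{Br}(K)$, and likewise for the dense open $V$, compatibly (a theorem of Grothendieck, "Le groupe de Brauer"). Hence $H^2_{\fl}(U,\mathbb{G}_m)[n]\to H^2_{\fl}(V,\mathbb{G}_m)[n]$ is injective, and the snake lemma identifies $\ker\!\big(H^2_{\fl}(U,\mu_n)\to H^2_{\fl}(V,\mu_n)\big)$ with the image, under the inclusion $c_1\colon\Pic(U)/n\hookrightarrow H^2_{\fl}(U,\mu_n)$, of $\ker\!\big(\Pic(U)/n\to\Pic(V)/n\big)$.

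Finally I would compute this last kernel. Since $U$ is regular it is locally factorial, so $\Pic=\operatorname{Cl}$, and the localization sequence for divisor class groups yields an exact sequence
\begin{equation*}
0\longrightarrow N\longrightarrow\Pic(U)\longrightarrow\Pic(V)\longrightarrow 0,
\end{equation*}
where $N\subseteq\Pic(U)$ is the subgroup generated by the classes $[\mathcal{O}_U(D_i)]$ of the codimension-one irreducible components of $D$ (components of $D$ of codimension $\ge 2$ being invisible to $\operatorname{Cl}$). Right exactness of $-\otimes\mathbb{Z}/n$ then shows that $\ker(\Pic(U)/n\to\Pic(V)/n)$ is the image of $N$ in $\Pic(U)/n$; since $c_1$ is a group homomorphism, its image under $c_1$ is precisely the subgroup of $H^2_{\fl}(U,\mu_n)$ generated by the classes $c_1(\mathcal{O}_U(D_i))$. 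Combined with the previous paragraph, this is the asserted equality.

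The only non-formal ingredients are the injectivity $H^2_{\fl}(U,\mathbb{G}_m)\hookrightarrow\operatorname{Br}(K)$ for regular integral schemes and the divisor-class-group localization sequence; I expect the former to be the point requiring care, being the low-degree manifestation of purity for the Brauer group, while everything else is a diagram chase with standard exact sequences.
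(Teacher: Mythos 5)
Your proof is correct and rests on exactly the same two non-formal inputs as the paper's argument: the injectivity of $H^2_{\fl}(U,\mathbb{G}_m)\to H^2_{\fl}(V,\mathbb{G}_m)$ for $U$ regular integral (Grothendieck, \emph{Le groupe de Brauer II}) and the surjectivity of $\Pic(U)\to\Pic(V)$ with kernel generated by the classes $[\mathcal{O}_U(D_i)]$. The difference is only in the bookkeeping --- the paper chases a diagram whose rows are cohomology-with-supports sequences and whose columns come from Kummer, deducing $H^2_{D,\fl}(U,\mathbb{G}_m)=0$ along the way, while you apply the snake lemma to the Kummer ladder and invoke the divisor-class-group localization sequence directly --- so the two arguments are essentially interchangeable.
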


The lemma is well-known for etale cohomology, but we do not know a
reference for fppf cohomology so we give a proof.
\begin{proof}

  We first note that for any scheme $U$, $H^1_{\fl}(U, \mathbb{G}_m)
  = \Pic(U)$, and if $U$ is regular then $H^2_{\fl}(U, \mathbb{G}_m) =
  \mathrm{Br}(U)$.

Consider the commutative diagram
\[
\xymatrix{ H^0_{\fl}(U, \mathbb{G}_m) \ar[r] \ar[d] & H^0_{\fl}(V,
  \mathbb{G}_m) \ar[r] \ar[d] & H^1_{D,\fl}(U, \mathbb{G}_m) \ar[r]^{g_1}
  \ar[d]^{c} & H^1_{\fl}(U, \mathbb{G}_m) \ar[r]^{g_2} \ar[d]^{c_U} &
  H^1_{\fl}(V, \mathbb{G}_m) \ar[d]^{c_V} \\
  H^1_{\fl}(U, \mu_n) \ar[r] \ar[d] & H^1_{\fl}(V, \mu_n) \ar[r]
  \ar[d] & H^2_{D,\fl}(U, \mu_n) \ar[r] \ar[d] & H^2_{\fl}(U, \mu_n)
  \ar[r] \ar[d] & H^2_{\fl}(V, \mu_n)
  \ar[d]  \\
  H^1_{\fl}(U, \mathbb{G}_m) \ar[r]^{f_1} & H^1_{\fl}(V, \mathbb{G}_m)
  \ar[r] & H^2_{D,\fl}(U, \mathbb{G}_m) \ar[r] & H^2_{\fl}(U,
  \mathbb{G}_m) \ar[r]^{f_2} &
  H^2_{\fl}(V, \mathbb{G}_m) \\
}
\]
where the rows come from the long exact sequence of cohomology with
supports and the columns from the Kummer sequence.

The map $f_1$ is surjective because $U$ is regular and the map $f_2$
is injective by \cite[Corollaire 1.10]{GBII}. This implies that $
H^2_{D,\fl}(U, \mathbb{G}_m) = 0$ so the map $c$ is surjective. The
claim then follows by a simple diagram chase, noting that the map
$c_U$ gives, by definition, the first Chern class of a line bundle on
$U$ and the kernel of $g_2$ (equal to the image of $g_1$) is precisely
the subgroup of $\Pic(U)$ generated by the $\mathcal{O}_U(D_i)$.

\end{proof}

\begin{lemma} \label{lem:ff} Let $f: U \to S$ be a smooth
  morphism of relative dimension one with $S$ the spectrum of a
  henselian dvr $R$ with quotient field $K$ and residue field
  $k$. Given any element $\alpha \in H^2_{\fl}(U, \mu_n)$ there
  exists a faithfully flat morphism $S' \to S$, with $S'$ also the
  spectrum of a dvr, such that the pullback of $\alpha$ in $U_{S'}$ is
  $0$.
\end{lemma}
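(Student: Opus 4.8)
The plan is to kill $\alpha$ in three stages: first over the generic point of $S$ after a finite extension of the fraction field $K$, then to express the resulting discrepancy over the whole family via the purity Lemma~\ref{lem:pure}, and finally to remove that discrepancy by a totally ramified base change.

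First I would work over the generic point. The generic fibre $U_K$ is a smooth affine curve over $K$ (we use here that $U$ is affine over $S$, as in the applications). Over an algebraic closure $\bar K$ one has $H^2_{\fl}(U_{\bar K},\mu_n)=0$: by the Kummer sequence this group is an extension of $\mathrm{Br}(U_{\bar K})[n]$ by $\Pic(U_{\bar K})/n$; the former vanishes by Tsen's theorem, and the latter vanishes because $\Pic(U_{\bar K})$ is divisible, being on each connected component a quotient of the Jacobian of a smooth compactification. Since fppf cohomology with $\mu_n$-coefficients commutes with cofiltered limits of schemes along affine transition maps, and $\Spec\bar K=\varprojlim_{K'}\Spec K'$ over the finite subextensions $K'/K$, the class $\alpha|_{U_K}$ already maps to $0$ in $H^2_{\fl}(U_{K'},\mu_n)$ for some finite $K'/K$.

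Next I would spread this out. Let $R'$ be the integral closure of $R$ in $K'$; since $R$ is a henselian dvr, its valuation has a unique extension to $K'$, so $R'$ is again a dvr, and $\Spec R'\to\Spec R$ is flat ($R'$ is torsion-free over the PID $R$) and surjective, hence faithfully flat. Then $U_{R'}$ is regular and Noetherian, the special fibre $U_{k'}$ is the effective Cartier divisor $\operatorname{div}(\pi')$ for a uniformizer $\pi'$ of $R'$, and $U_{K'}=U_{R'}\setminus U_{k'}$ is the generic fibre. As $U_{R'}\to\Spec R'$ is smooth, $U_{k'}$ is reduced and its irreducible components $D_1,\dots,D_r$ are pairwise disjoint (being the connected components of the smooth $k'$-scheme $U_{k'}$), and $\sum_i D_i=\operatorname{div}(\pi')$. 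Since $\alpha|_{U_{R'}}$ restricts to $\alpha|_{U_{K'}}=0$ on the generic fibre, Lemma~\ref{lem:pure} applied with $U=U_{R'}$ and $D=U_{k'}$ gives integers $a_i$ with $\alpha|_{U_{R'}}=\sum_i a_i\,c_1\bigl(\mathcal O_{U_{R'}}(D_i)\bigr)$. (If $r=1$ this is already $0$, since $\operatorname{div}(\pi')$ is principal.)

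Finally I would kill this class by a ramified base change. Put $R''=R'[t]/(t^n-\pi')$, a dvr with uniformizer $t$ and residue field $k''=k'$; then $\Spec R''\to\Spec R'$ is finite free, so the composite $S':=\Spec R''\to\Spec R=S$ is faithfully flat with $S'$ the spectrum of a dvr. Let $\phi\colon U_{R''}\to U_{R'}$ be the base change. On the open set $U_{R'}\setminus\bigcup_{j\neq i}D_j\supseteq D_i$ the function $\pi'$ is a local equation for $D_i$; since $\phi^*\pi'=t^n$ and $U_{k''}=\operatorname{div}(t)=\sum_i D_i''$ with $D_i''=\phi^{-1}(D_i)_{\mathrm{red}}$, one gets $\phi^*D_i=n\,D_i''$ as Cartier divisors. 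Hence $\phi^*\mathcal O_{U_{R'}}\bigl(\sum_i a_iD_i\bigr)\cong\mathcal O_{U_{R''}}\bigl(\sum_i a_iD_i''\bigr)^{\otimes n}$ is an $n$-th power, so its first Chern class is $0$, and therefore $\alpha$ pulls back to $0$ in $H^2_{\fl}(U_{S'},\mu_n)$, as required. The main obstacle is the first step: one must pin down the hypothesis ensuring that $\alpha$ dies on the generic fibre — the statement is false for, e.g., $\pone_R$ and $c_1(\mathcal O(1))$, so non-properness of the fibres is essential — and justify both the vanishing of $H^2_{\fl}(U_{\bar K},\mu_n)$ and the limit argument in fppf cohomology; after that, the purity lemma and an elementary divisor computation finish it.
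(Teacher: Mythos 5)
Your proof is correct and follows essentially the same route as the paper's: kill $\alpha$ on the generic fibre after a finite extension of $K$ (Kummer sequence, Tsen, divisibility of $\Pic$ of an affine curve over $\overline{K}$), identify the remaining kernel via Lemma \ref{lem:pure} with the span of the classes of the components of the special fibre, and annihilate those by a base change with ramification degree divisible by $n$. Your observation that affineness of the fibres is needed (and implicitly used) is a valid and worthwhile point, since the lemma as literally stated would fail for $\pone_R$.
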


If $R$ is excellent, or $n$ is invertible in $\mathcal{O}_S$, the proof
shows that $S' \to S$ can be chosen to be finite.

\begin{proof}

  Let $U_0$ be the closed fibre of $f$ and set $V = U \backslash U_0$.
  Since $V$ is an affine curve over $K$,
  $H^2_{\fl}(V_{\overline{K}},\mu_n) = 0$. This follows from the
  Kummer sequence, and the following facts for smooth curves $Y$ over an
  algebraically closed field: The vanishing of $\mathrm{Br}(Y)$ (Tsen's theorem)
  and, if $Y$ is affine, the surjectivity of multiplication by $n$ map on
  $\Pic(Y)$ for $n>0$. It follows that there exists a
  finite extension $K_1$ of $K$ so that the image of $\alpha$ in
  $H^2_{\fl}(V,\mu_n)$ becomes $0$ in $H^2_{\fl}(V_{K_1},\mu_n)$.  By
  replacing $R$ by its integral closure $R_1$ in $K_1$ (which is still
  a dvr), functoriality implies that we may assume $\alpha$ is in the
  kernel of the restriction map
  $ r: H^2_{\fl}(U,\mu_n) \to H^2_{\fl}(V,\mu_n) $.

  Since $f$ is smooth, $U$ is regular, so by Lemma \ref{lem:pure} the
  kernel of $r$ is spanned by the fundamental classes of the
  irreducible components of $U_0$. Let $R \to R'$ be a finite map,
  with $R'$ a dvr, such that the ramification degree is divisible by
  $n$ and set $S' = \Spec(R')$. The
  pullbacks of the Chern classes of all the components of $U_0$ become
  divisible by $n$, hence are all $0$ in
  $H^2_{\fl}(U_{S'},\mu_n)$. We conclude that $\alpha$ also becomes
  $0$ in $H^2_{\fl}(U_{S'},\mu_n)$.
\end{proof}


Let $f:X\to S$ and $D$ be as in the introduction, $G$ an
arbitrary semisimple group and $P$ a principal $G$-bundle on $X$.

\begin{proposition}\label{afterT}
Assume
\begin{enumerate}
\item $S$ is an excellent regular (purely) one dimensional scheme
  (e.g., a smooth curve over a field).
\item $X$ is reduced.
\item The closure of the non-regular locus of $U=X-D$ does not
  intersect $D$; this is equivalent to assuming that the non-regular
  locus of $U$ is finite over $S$, e.g., $U$ has isolated
  singularities.
\item $\alpha_P$ is zero when restricted to $U$.
\end{enumerate}
Then there is a surjective \'{e}tale  morphism
$S'\to S$ such that $P$ is trivial on $U_{S'}$.
\end{proposition}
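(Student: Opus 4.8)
The plan is to deduce everything from Theorem~\ref{triv3} applied to the simply connected cover $\widetilde{G}$ of $G$. Since $\pi_1(G)$ is central, the sheaf sequence $1\to\pi_1(G)_X\to\widetilde{G}_X\to G_X\to 1$ gives the usual exactness of pointed sets in cohomology, so hypothesis~(4) (vanishing of $\alpha_P$ on $U$) provides a lift of $P|_U$ to a $\widetilde{G}$-torsor $\widetilde{P}$ on $U$ whose associated $G$-torsor is $P|_U$ (and $\widetilde{P}$ is even \'{e}tale-locally trivial, $\widetilde{G}$ being smooth). It therefore suffices to make $\widetilde{P}$ trivial on $U_{S'}$ for some surjective \'{e}tale $S'\to S$, and $G$ plays no further role.

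I would first reduce to $S=\Spec R$ with $R$ an excellent Henselian discrete valuation ring, using hypothesis~(1): the set of $s\in S$ over which $\widetilde{P}|_U$ becomes trivial on some \'{e}tale neighbourhood is open, membership at $s$ is equivalent (by a limit argument, $f$ and $\widetilde{G}$ being of finite presentation over $S$) to triviality of $\widetilde{P}|_U$ after a finite \'{e}tale extension of $\mathcal{O}_{S,s}^{h}$, and finitely many such neighbourhoods, made disjoint, assemble into the desired $S'\to S$. Over such an $R$ the goal becomes: extend $\widetilde{P}$ from $U$ to a $\widetilde{G}$-torsor $\widetilde{Q}$ on $X$ --- or on a proper model of $X$ that still has $U$ as the complement of an $S$-flat relatively ample Cartier divisor --- after a finite \'{e}tale base change, and then invoke Theorem~\ref{triv3} for $\widetilde{Q}$. (Essentially equivalently, one may try to run the argument of Drinfeld--Simpson \cite{DS} and \cite{BF} directly on the affine-over-$S$ curve $U$: reduce the structure group of $\widetilde{P}$ to a Borel after an \'{e}tale base change, trivialise the maximal-torus part after modifying the reduction --- here the simple connectedness of $\widetilde{G}$ enters, the relevant cocharacter lattice being the coroot lattice --- and trivialise the unipotent part using that $U$ is affine.)

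To build the extension across $D$ I would work divisor-first. Over the finitely many codimension-one generic points $\eta$ of $D$: since $X$ is reduced (hypothesis~(2)), the punctured spectrum of $\mathcal{O}_{X,\eta}$ consists of generic points of $X$, all lying in $U$; the $G$-torsor $P$ is itself defined over $\mathcal{O}_{X,\eta}$, and its obstruction to lifting there lies in $H^{2}_{\fl}(\mathcal{O}_{X,\eta},\pi_1(G))$, which injects (vanishing of $\Pic$ of a local ring, Auslander--Goldman, after reduction to the discrete valuation case) into $H^{2}_{\fl}$ of the total ring of fractions, where it agrees with the vanishing restriction of $\alpha_P$; one then has to match this lift with $\widetilde{P}$ over the fraction field and extend across $\eta$. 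Hypothesis~(3) guarantees that the non-regular locus of $U$ stays away from $D$, so the only remaining obstructions are at finitely many closed points of $X$ on $D$, at which $\widetilde{P}$ is already a torsor on the whole punctured spectrum; these one removes by purity for reductive-group torsors over a regular two-dimensional local ring where $X$ is regular, and by a Henselian-locality argument over the residue field (made of cohomological dimension $\le 1$ by a finite \'{e}tale base change on $S$) where it is not.

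The step I expect to be the real obstacle is this extension, and specifically carrying it out with an \'{e}tale rather than merely faithfully flat base change. The local obstructions along $D$ are, a priori, only killable after a \emph{ramified} base change --- this is precisely the mechanism behind Lemma~\ref{lem:ff} and behind the failure of condition~(L) in Proposition~\ref{one} --- and what makes it possible to stay \'{e}tale is exactly that $\alpha_P$ vanishes on $U$, equivalently that one has passed to the simply connected $\widetilde{G}$, so that no $\mu_n$-type arithmetic obstruction survives; hypotheses~(1)--(3) then take care of the purely geometric side (the behaviour at the generic points of $D$, the finiteness of the exceptional closed points, the choice of a suitable proper model).
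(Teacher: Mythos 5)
Your opening move (lift $P|_U$ to a $\widetilde{G}$-torsor using the vanishing of $\alpha_P$ on $U$, then try to reduce to Theorem~\ref{triv3}) is the right general idea, but the proposal leaves its decisive step --- extending the lifted torsor across $D$ to a proper model --- unexecuted, and you say so yourself (``the step I expect to be the real obstacle''). The route you sketch for that step would moreover run into trouble: you propose to extend across the codimension-one and closed points of $D$ inside $X$ itself, invoking purity for reductive-group torsors over regular two-dimensional local rings, but the whole point of this paper's setting is that $D$ is allowed to pass through the singular locus of $f$ (as in the examples of Section~\ref{cordes}), so $X$ need not be regular anywhere near $D$; hypothesis~(3) only controls the non-regular locus of $U$, not of $X$ along $D$. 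Purity for non-abelian torsors is not available at such points, and the ``Henselian-locality argument over the residue field of cohomological dimension $\le 1$'' is not something a finite \'etale base change of $S$ can produce. Your preliminary reduction to a Henselian dvr is also not needed and is not how the paper proceeds.

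The paper circumvents the extension problem with two ideas absent from your proposal. First, it replaces $X$ by a (partial) resolution $\widetilde{X}\to X$ that is an isomorphism over $U$ and is regular near $\widetilde{X}-U$ (this uses excellence of $S$ and Lipman's theorem), and checks that $\widetilde{X}-U$ still supports a relatively ample, possibly non-effective, Cartier divisor --- which is why Remark~\ref{improv} generalizes \cite[Theorem 1.4]{BF} to non-effective divisors with non-flat components. Second, and crucially, it does \emph{not} lift the $G$-torsor directly: after an \'etale base change it reduces $P$ on all of $X$ to a maximal torus $H$ (via a $B$-reduction, as in \cite{BF}), lifts the resulting $H$-bundle on $U$ to an $\widetilde{H}$-bundle using hypothesis~(4) and the compatibility $Z\subset\widetilde{H}\subset\widetilde{G}$, and then the extension across $\widetilde{X}-U$ becomes trivial: an $\widetilde{H}$-bundle is a tuple of line bundles, and line bundles on $U$ extend over the regular scheme $\widetilde{X}$. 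Only then is the simply connected result applied to the induced $\widetilde{G}$-bundle on $\widetilde{X}$. Without some substitute for these two steps your argument does not close.
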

\begin{proof}
  For the sake of clarity we first deal with the case $U$ is regular. Let
  $g:\widetilde{X}\to X$ be a resolution of singularities of $X$
  \cite{Lipman}. By our assumptions $U$ is regular so we can assume
  $U\subset \widetilde{X}$. Note that $\widetilde{X}\to S$ is flat
  since $S$ is regular and one-dimensional.

  We first show that $\widetilde{X}-U$ supports a relatively ample
  Cartier divisor $\widetilde{D}$ (possibly non-effective). By
  assumption $D$ supports a relatively ample divisor $D'$. A
  resolution of singularities for $X$ can be obtained by iterating the
  process of normalization and then blowing up the singular locus
  (cf. \cite{Lipman}, and $X$ is excellent).  Let the resulting
  schemes be denoted by $X_0=X,X_1,\dots, X_s=\widetilde{X}$.  We
  build relatively ample Cartier divisors $D_r$ at each step of this
  resolution $X_r$, $D_0=D'$, and finally set $\widetilde{D}=D_s$. For
  the normalization step, we just pull back the Cartier divisor from
  the previous step. For a resolution step $g:X_{r+1}\to X_r$, let
  $E_{r+1}$ be the exceptional divisor of the blow up $g$. It is easy
  to see that then ${D}_{r+1}=g^*(n{D}_r) - E_{r+1}$ is relatively
  ample for $n$ sufficiently large (use \cite[Proposition
  II.7.10]{Hartshorne}: We may assume that $D'$, and hence each $D_r$,
  is actually ample since $S$ is affine).

Therefore, $\widetilde{X}-U$ supports a relatively ample Cartier
divisor $\widetilde{D}$ (possibly non-effective). Let
$L=\mathcal{O}_{\widetilde{X}}(\widetilde{D})$ be the corresponding
relatively ample line bundle on $\widetilde{X}$; it is trivial on
$U$.

Assume that $S$ is affine.  By replacing $S$ by an \'{e}tale cover we
may assume that $P$ has a $B$-reduction, where $B$ is a Borel subgroup
of $G$, and then we may also assume (as in \S 3.2.2 of \cite{BF}) that
$P$ is induced from an $H$-bundle $E$, where $H$ is a maximal torus of
$B$.

Let $\widetilde{G}$ be the simply connected cover of $G$ and and let
$Z \cong \pi_1(G)$ be the kernel of the covering map
$\widetilde{G} \to G$. Let $\widetilde{H}$ be the maximal torus in
$\widetilde{G}$ mapping onto $H$, so $Z \subset \widetilde{H}$. We
have a commutative diagram 
\[
\xymatrix{
1 \ar[r] & Z \ar[r] \ar[d] &  \widetilde{H} \ar[r] \ar[d] &  H \ar[r] \ar[d]
& 1 \\
1 \ar[r] & Z \ar[r] & \widetilde{G} \ar[r] & G \ar[r]  & 1
}
\]
whose rows are exact sequences of group schemes.  Since we have
assumed that $\alpha_P$ becomes $0$ on $U$, by the commutativity of
the diagram it follows that $E|_U$ lifts to a $\widetilde{H}$-bundle
$\tilde{E}_U$ on $U$. Since $\widetilde{H}$ is a torus and
$\widetilde{X}$ is regular, $\tilde{E}_U$ extends to a
$\widetilde{H}$-bundle $\widetilde{E}$ on $\widetilde{X}$ (this
follows from the fact that line bundles on $U$ extend to
$\widetilde{X}$).

Let $\widetilde{P}$ be the induced $\widetilde{G}$-bundle on
$\widetilde{X}$. Since $\widetilde{G}$ is simply connected, and
$\widetilde{X}-U$ supports a relatively ample Cartier divisor
$\widetilde{D}$, by (almost) the same argument as in \S 3.2.2 of
\cite{BF} (see Remark \ref{improv} below) we see that
$\widetilde{P}|_U$ is trivial, hence $P|_U$ is also trivial.

If $U$ is not regular, we employ the following strategy: We
choose a partial desingularization $\widetilde{X}\to X$ which is an
isomorphism over $U$, such that $\widetilde{X}$ is regular in a
neighborhood of $\widetilde{X}-U$.  To see that such a partial
desingularization exists, first consider a full desingularization
$Q\to X$. We just carry out only those blow ups with support over
${X}-U$, and normalize only in neighbourhoods of inverse images of
$X-U$, and obtain $\widetilde{X}\to X$ which is an isomorphism over
$U$, with $\widetilde{X}$ regular on the complement of $U$.
\end{proof}

\begin{remark}\label{improv}
  \cite[Theorem 1.4]{BF} can be generalized as follows: Let $S$ be an
  arbitrary scheme over $\operatorname{Spec}(\Bbb{Z})$ and let
  $f:X\to S$ be a proper, flat and finitely presented curve over
  $S$. Let $E$ be a principal $G$-bundle on $X$ with $G$ semisimple
  and simply connected. Let $U\subset X$ be an open subset, affine
  over $S$, such that $X-U$ supports a relatively ample Cartier
  divisor $D$ for $X\to S$ (possibly non-effective, whose components
  need not be flat over $S$). Then, after a surjective \'{e}tale base
  change $S'\to S$, $E$ is trivial on $U_{S'}$.

  To prove this we need to slightly modify the proof of
  \cite[Proposition 3.2]{BF} as follows: Assume $S$ is affine. We
  twist by tensor powers of $L=\mathcal{O}(D)$ to find subbundles
  $\mathcal{O}\subset E_i\tensor L^{\tensor{r}}$ with corresponding
  quotients $T_i$, $i=1,2$. On the affine open subset
  $U\subseteq {X}$, these extensions of vector bundles split, and $L$
  is trivial. Therefore, restricted to $U$ we get
  $E_i=\mathcal{O}\oplus T_i$, with $T_i$ a line bundle. But $T_i$ is
  the determinant of $E_i$, hence $T_1$ and $T_2$ are isomorphic on
  $U$.
\end{remark}

By combining Lemma \ref{lem:ff} and Proposition \ref{afterT} we
obtain:

\begin{theorem} \label{thm:dim1} Let $f:X \to S$ be flat projective
  curve, $D \subset X$ a relatively ample Cartier divisor which is
  flat over $S$ and set $U = X \backslash D$. Let $G$ be a semisimple
  group and let $P$ be a principal $G$-bundle on $X$. Assume further
  that
\begin{enumerate}
\item $S$ is an excellent regular (purely) one dimensional scheme and
\item $U$ is smooth over $S$.
\end{enumerate}
Then there is a faithfully flat morphism $S' \to S$ such that $P$
becomes trivial on $U_{S'}$.
\end{theorem}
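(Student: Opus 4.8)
The plan is to arrange, by a faithfully flat base change, that $\alpha_P$ restricts to $0$ on $U$, and then to invoke Proposition~\ref{afterT}. Two of that proposition's three non-obstruction hypotheses cost nothing here: (3) holds automatically because $U$, being smooth over the regular scheme $S$, is regular, so its non-regular locus is empty; and (1) is among our hypotheses and is preserved by the finite surjective base changes we shall make, since the normalization of a scheme finite over an excellent, regular, purely one-dimensional scheme is again of this kind (and the remaining data of the theorem is likewise preserved). Hypothesis (2) will be arranged by replacing $X$ with $X_{\mathrm{red}}$ just before applying the proposition, which changes neither $U$, nor $P|_U$, nor $\alpha_P|_U$, and leaves $D$ a relatively ample Cartier divisor flat over $S$. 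Finally we may assume $S$ connected, hence integral (being regular), with function field $K$, faithful flatness over the finitely many connected components being tested separately; then $U$ is a regular two-dimensional scheme that is smooth of relative dimension one and --- as $D$ is relatively ample --- affine over $S$.

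First I would reduce the coefficient group. The scheme-theoretic kernel $\pi_1(G)$ of $\widetilde{G}\to G$ is a finite diagonalizable group scheme over $\Spec(\Bbb{Z})$, hence isomorphic to a product $\prod_i\mu_{n_i}$; so $H^2_{\fl}(U,\pi_1(G))=\bigoplus_i H^2_{\fl}(U,\mu_{n_i})$, and, composing finitely many base changes one at a time, it suffices to kill the image $\alpha$ of $\alpha_P|_U$ in a single factor $H^2_{\fl}(U,\mu_n)$. From here I would repeat the proof of Lemma~\ref{lem:ff}, whose henselian hypothesis served only to keep integral closures local and is inessential. The generic fibre $U_\eta$ is a smooth affine curve over $K$, so $H^2_{\fl}(U_{\overline K},\mu_n)=0$ by Tsen's theorem for the Brauer group together with the $n$-divisibility of the Picard group of a smooth affine curve over an algebraically closed field, via the Kummer sequence. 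Since $\mu_n$ is of finite presentation, fppf cohomology commutes with the relevant filtered limits, so some finite extension $K_1/K$ kills $\alpha|_{U_\eta}$; replacing $S$ by its normalization in $K_1$ --- finite and faithfully flat over $S$, again excellent, regular, and purely one-dimensional --- and applying the same limit property to the cofiltered system of open subschemes $V\ni\eta$ (whose transition maps are affine), we may assume $\alpha|_{U_V}=0$ for some open $V=S\setminus T$ with $T$ a finite set of closed points of $S$.

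Now the union $\bigcup_{s\in T}U_s$ of the fibres of $U\to S$ over the points of $T$ is a closed subscheme of the regular scheme $U$, and its irreducible components are the irreducible components $Z$ of the individual fibres $U_s$; since each $U_s$ is smooth over $\kappa(s)$, these $Z$ are smooth reduced curves, pairwise disjoint within a given fibre. By Lemma~\ref{lem:pure} the kernel of $H^2_{\fl}(U,\mu_n)\to H^2_{\fl}(U_V,\mu_n)$ is generated by the classes $c_1(\mathcal{O}_U(Z))$, so $\alpha=\sum_j a_j\,c_1(\mathcal{O}_U(Z_j))$ is a finite sum with each $Z_j$ a component of some $U_s$, $s\in T$. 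I would then choose a finite faithfully flat morphism $S'\to S$ with $S'$ normal whose ramification index over every point of $T$ is divisible by $n$ (a suitable Kummer or Artin--Schreier extension of $K$ provides this, or one treats the points of $T$ one at a time and composes), and write $g\colon U_{S'}\to U$ for the resulting projection. The crucial point is that \emph{each} $c_1(\mathcal{O}_U(Z_j))$ individually --- not merely the class of a full fibre --- pulls back to $0$: because the components of $U_s$ are disjoint, in a neighbourhood of $Z_j$ the ideal of $Z_j$ in $\mathcal{O}_U$ is generated by $f^*\pi_s$ for a uniformizer $\pi_s$ at $s$, and the pullback of $f^*\pi_s$ to $U_{S'}$ is, up to a unit, the $e$-th power of a local equation for the reduced preimage of $Z_j$, where $n\mid e$; hence $g^*\mathcal{O}_U(Z_j)$ is an $n$-th power in $\Pic(U_{S'})$ and has trivial first Chern class in $H^2_{\fl}(U_{S'},\mu_n)$. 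This use of the smoothness of $U$ over $S$ --- as opposed to mere regularity of $U$ --- together with the bookkeeping needed to keep all the base changes finite and faithfully flat, so that they compose cleanly with the \'{e}tale base change supplied by Proposition~\ref{afterT}, is the step I expect to demand the most care.

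Composing the base changes of the previous two paragraphs produces a finite faithfully flat $S_1\to S$ with $\alpha_P|_{U_{S_1}}=0$ in $H^2_{\fl}(U_{S_1},\pi_1(G))$. Replacing $X_{S_1}$ with its reduction, all of hypotheses (1)--(4) of Proposition~\ref{afterT} hold over $S_1$, so that proposition yields a surjective \'{e}tale morphism $S_2\to S_1$ for which $P$ is trivial on $U_{S_2}$; the composite $S_2\to S_1\to S$ is then faithfully flat, and $P$ becomes trivial on $U_{S_2}$, as desired.
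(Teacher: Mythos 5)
Your proof is correct and follows essentially the same route as the paper: kill $\alpha_P$ on $U$ by a faithfully flat (finite) base change using Lemma~\ref{lem:pure} and the mechanism of Lemma~\ref{lem:ff}, then apply Proposition~\ref{afterT} to the reduction of the base-changed family. The only difference is that the paper cites Lemma~\ref{lem:ff} directly (stated there only for a henselian dvr base), whereas you carry out its globalization over a general excellent regular one-dimensional $S$ --- including the observation that smoothness of $U\to S$ is what makes the individual fibre components' Chern classes die after a ramified cover --- a detail the paper leaves implicit.
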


\begin{proof}
  Since $\pi_1(G)$ is a finite group scheme of multiplicative type, by
  applying Lemma \ref{lem:ff} we may find a faithfully flat finite
  type cover $S_1 \to S$, with $S_1$ also excellent regular and one
  dimensional, such that $\alpha_P$ becomes $0$ on $U_{S_1}$.  We then
  apply Proposition \ref{afterT} to the induced morphism
  $X_1^{red} \to S_1$, where $X_1:= X \times_S S_1$, to get an
  \'{e}tale cover $S' \to S_1$ so that $P$ becomes trivial on
  $U_{S'}$. The composition of the maps $S' \to S_1 \to S$ is the
  desired faithfully flat map

\end{proof}

\subsection{Lifting to vector bundles}
A principal $\op{PGL}(m)$-bundle $P$ on $X$ gives rise to a cohomology
class $\alpha_P\in H^2_{\fl}(X, \mu_m)$ as well a cohomology class
$\beta_P\in H^2_{et}(X, \Bbb{G}_m)$ ($= H^2_{\fl}(X, \Bbb{G}_m)$), by
considering the exact sequence of group schemes
$$1\to \Bbb{G}_{m} \to
{\operatorname{GL}}(m) \to {\operatorname{PGL}}(m)\to 1 \ .$$

Clearly $\alpha_P$ maps to $\beta_P$ under the natural map
$H^2_{\fl}(X, \mu_m)\to H^2_{et}(X, \Bbb{G}_m)$. It is easy to see
 that $\beta_P$ represents the obstruction to lifting $P$
  to a principal ${\operatorname{GL}}(m)$-bundle, i.e., a
  vector bundle on $X$.

  Condition (L) implies that $\beta_P$ maps to zero in
  $H^2_{et}(U, \Bbb{G}_m)$, i.e., $P$ can be lifted to a vector
  bundle on $U$. In fact, under somewhat mild conditions, $P$ can be
  lifted to a vector bundle on $X$ after a surjective \'{e}tale base
  change of $S$:
\begin{proposition}\label{extensio}
  Assume $S$ is smooth, and the smooth locus of $X\to S$ is dense in
  every fiber.  If $\beta_P$ maps to zero in $H^2_{et}(U, \Bbb{G}_m)$,
  then after an \'{e}tale base change in $S$, $P$ comes from a vector
  bundle on $X$ and hence $\beta_P\in H^2_{et}(X, \Bbb{G}_m)$ becomes
  zero.
\end{proposition}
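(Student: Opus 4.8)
The plan is to reduce to a local base and then show the obstruction class vanishes outright. First I would use that the assertion is local on $S$ for the \'etale topology and of finite presentation, so that by a limit argument it suffices to treat the case $S=\Spec R$ with $R$ a strictly henselian excellent regular local ring (residue field $k$ separably closed) and to produce a vector bundle on $X=X_R$ itself lifting $P$. Writing $\mathcal{E}nd(P)$ for the rank $m^2$ Azumaya algebra of $P$, this amounts to showing that its class $\beta_P\in H^2_{et}(X,\Bbb{G}_m)$ is zero; and since $\beta_P$ is killed by $m$, it is enough to prove $H^2_{et}(X,\Bbb{G}_m)[m]=0$.

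To prove that I would compare $X$ with its closed fibre $X_0=X\times_S\Spec k$, a proper curve over the separably closed field $k$, using three ingredients. $(i)$ Proper base change gives an isomorphism $H^2_{et}(X,\mu_m)\xrightarrow{\sim}H^2_{et}(X_0,\mu_m)$. $(ii)$ The restriction $\Pic(X)\to\Pic(X_0)$ is surjective: the relative Picard functor is smooth over $R$ because $H^2(X_s,\mathcal{O}_{X_s})=0$ for the $1$-dimensional fibres, and smoothness over the henselian $R$ lets $k$-points lift to $R$-points (equivalently: line bundles lift along the infinitesimal thickenings of $X_0$, the obstructions lying in $H^2$ of coherent sheaves on the curve $X_0$, hence lift to the formal completion, and then descend to $X$ by formal GAGA over $\widehat R$ together with Artin approximation over the excellent henselian $R$). $(iii)$ $\mathrm{Br}(X_0)=H^2_{et}(X_0,\Bbb{G}_m)=0$ for a proper curve over a separably closed field---via the normalisation sequence this reduces to Tsen's theorem for the smooth projective model, the contributions of nilpotents and of the finitely many non-normal points being killed by coherent-cohomology vanishing on a curve and by the vanishing of the Brauer group of an Artinian scheme over $k$. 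Then a short diagram chase in the ladder of Kummer sequences $0\to\Pic(X)/m\to H^2_{et}(X,\mu_m)\to H^2_{et}(X,\Bbb{G}_m)[m]\to 0$ and its analogue for $X_0$---using that the left map is surjective by $(ii)$ and the middle is an isomorphism by $(i)$---identifies $H^2_{et}(X,\Bbb{G}_m)[m]$ with $H^2_{et}(X_0,\Bbb{G}_m)[m]$, which is $0$ by $(iii)$. Hence $\beta_P=0$, so $P$ lifts to a vector bundle on $X_R$; spreading this out and carrying it out over the points of $S$ produces the required surjective \'etale $S'\to S$.

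The step I expect to require the most care is $(ii)$ over the non-complete henselian base $R$: making precise either the representability and smoothness of the relative Picard functor, or the combination of formal GAGA with Artin approximation for the excellent henselian local ring $R$; similarly the spreading-out in the first and last reductions must be spelled out. I also note that in this approach the hypotheses ``$\beta_P|_U=0$'' and ``the smooth locus of $f$ is dense in the fibres'' are not actually needed for $G=\op{PGL}(m)$---indeed $\beta_P$ itself becomes $0$ after the base change. They are what one needs for the generalisation to arbitrary semisimple $G$ alluded to in Remark \ref{marigold}, where $(iii)$ has no analogue because $H^2_{et}(X_0,\pi_1(G))$ (for instance $H^2_{et}(X_0,\mu_m)$) is typically nonzero, so one must assume the relevant obstruction dies on $U$, a condition that, after an \'etale base change, is weaker than (L), and that is exactly what feeds the Kummer/diagram-chase step in that case.
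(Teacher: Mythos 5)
Your proposal takes a genuinely different route from the paper, and it is not complete in the generality in which the proposition is stated. The paper's proof is elementary and geometric: after an \'etale base change one chooses sections of $X\to S$ landing in the relative smooth locus (this is where density of the smooth locus in the fibres is used), disjoint from $D$ and with relatively ample union; by \cite{BF}*{Theorem 1.5} the bundle $P$ is trivial on the complement $U'$ of these sections and so lifts to a trivial vector bundle $W$ there; the hypothesis $\beta_P|_U=0$ supplies a lift $V$ on $U$; on $U'\cap U$ the two lifts differ by a line bundle $L$, which extends over $U-U'\cap U$ because $U$ is smooth there, and one glues $V\tensor L$ to $W$ to get a vector bundle on all of $X$. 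Both stated hypotheses are used essentially, the argument is characteristic-free, and it generalizes verbatim to arbitrary semisimple $G$ (Remark \ref{marigold}). You instead aim for the a priori stronger statement that $\beta_P$ itself dies \'etale-locally on $S$ with no hypotheses, by proving $H^2(X_R,\Bbb{G}_m)[m]=0$ over the strict henselization $R$ of $S$ at each point, via proper base change for $\mu_m$, surjectivity of $\Pic(X_R)\to\Pic(X_0)$, and vanishing of the Brauer group of the closed fibre. When $m$ is invertible on $S$ this strategy is sound (and your observation that the two hypotheses are then superfluous for $G=\op{PGL}(m)$, but indispensable for the generalization to other $G$, is correct), though step $(ii)$ genuinely requires the representability/smoothness of the relative Picard functor or the Grothendieck existence plus Artin approximation package, which is far heavier machinery than anything the paper invokes.

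The concrete gap is the residue characteristic. Your whole diagram chase rests on the exact sequence $0\to\Pic(\cdot)/m\to H^2(\cdot,\mu_m)\to H^2(\cdot,\Bbb{G}_m)[m]\to 0$, which in \'etale cohomology requires $m$ to be invertible; the proposition imposes no such restriction, and indeed the paper is careful elsewhere to work with $H^2_{\fl}$ for exactly this reason. If you pass to fppf cohomology to restore exactness of the Kummer sequence, then the step you label as standard --- the proper base change isomorphism $H^2(X,\mu_m)\cong H^2(X_0,\mu_m)$ --- is no longer a standard theorem for $H^2_{\fl}(\cdot,\mu_m)$ when the residue characteristic divides $m$ (proper base change in the usual form is a statement about torsion \'etale sheaves). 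So as written your argument proves the proposition only under the additional hypothesis that $m$ is invertible on $S$, whereas the paper's gluing argument needs no such assumption. Either restrict to that case explicitly, or supply a base change statement for fppf cohomology of $\mu_m$ (or an argument comparing $\mathrm{Br}(X_R)$ directly with $\mathrm{Br}(X_0)$ by deforming Azumaya algebras rather than $\mu_m$-classes).
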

\begin{proof}
  After an \'{e}tale base change in $S$, we can find sections of
  $X\to S$ such that their union is disjoint from $D$ and contained in
  the smooth locus of $X\to S$.  We can also assume that the union of
  these sections is relatively ample. Let $U'\subset X$ be the
  complement of these sections. Using \cite[Theorem 1.5]{BF}, we may
  assume that $P$ is trivial on $U'$ after an \'{e}tale base change in
  $S$. Lift this trivial $\op{PGL}(m)$-bundle to a vector bundle $W$
  on $U'$.

  By assumption, $P$ comes from a vector bundle $V$ on $U$.  Thus, we
  have two $\op{GL}(m)$-bundles $W$ and $V$ on $U'\cap U$ which
  coincide as $\op{PGL}(m)$-bundles. Let $L^*$ be the sheaf of
  isomorphisms $V\to W$ which induce identity on the underlying
  $\op{PGL}(m)$-bundle. Clearly $L^*$ is a $\Bbb{G}_m$-bundle, let $L$
  be the corresponding line bundle. Hence $W$ is isomorphic to
  $V\tensor L$ on $U'\cap U$. Extend $L$ to a line bundle on $U$ ($U$
  is smooth along $U-U'\cap U$ since $S$ is smooth and the sections
  have images in the smooth locus of $X\to S$). Now glue the vector
  bundle $V\tensor L$ (a vector bundle on $U$) with $W$ (a vector
  bundle on $U'$) over $U'\cap U$, to get a vector bundle $A$ on $X$.
  The $\op{PGL}(m)$-bundle induced from $A$ equals $P$ which completes
  the proof.
\end{proof}

The examples in Section \ref{cordes} all had $\beta_P=0$, i.e., came
from vector bundles on $X$. So $\beta_P=0$ is a lot weaker than
$\alpha_P=0$.

\begin{remark}\label{marigold}
  The proof of Prop \ref{extensio} works more generally: For $G$ any
  semisimple group (replacing $\op{PGL}(m)$) let $G'$ be as in Lemma
  \ref{lem:cover} below.  For $P$ a principal $G$-bundle on $X$ we
  get, as above, elements $\alpha_P\in H^2_{\fl}(X, \pi_1(G))$ and
  $\beta_P\in H^2_{et}(X, K)$.  Suppose $\beta_P$ maps to zero in
  $H^2_{et}(U, K)$, which would be the case if $\alpha_P$ maps to zero
  in $H^2_{\fl}(U, \pi_1(G))$ (i.e., if condition (L) holds). Then,
  after an \'{e}tale base change in $S$, $P$ comes from a principal
  $G'$-bundle on $X$ and hence $\beta_P\in H^2_{et}(X, K)$ becomes
  zero.

\end{remark}

The following lemma is well-known, we give a proof for the convenience
of the reader.

\begin{lem} \label{lem:cover} For any semisimple group $G$ there
  exists a reductive group $G'$ mapping surjectively to $G$ with
  kernel a central torus $K$, and such that the derived group of $G'$
  is simply connected.
\end{lem}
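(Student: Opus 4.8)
The plan is to construct $G'$ from the simply connected cover $\widetilde{G}$ of $G$ by "fattening" the kernel $Z = \pi_1(G)$ into a torus. Concretely, choose a torus $T$ together with a closed immersion $Z \hookrightarrow T$; since $Z$ is a finite group scheme of multiplicative type, such an embedding exists (e.g.\ embed the Cartier dual, a finite abelian group, into a free abelian group $\mathbb{Z}^r$, and dualize to get $Z \hookrightarrow \mathbb{G}_m^r =: T$). Now form the pushout (amalgamated product) in the category of group schemes
\[
G' := (\widetilde{G} \times T)/Z,
\]
where $Z$ is embedded anti-diagonally via $z \mapsto (z, z^{-1})$ using the central inclusions $Z \subset \widetilde{G}$ and $Z \subset T$. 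Since $Z$ is central in $\widetilde{G} \times T$, this quotient is a group scheme, and it is reductive because $\widetilde{G} \times T$ is and we are quotienting by a finite central subgroup.

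Next I would verify the three required properties. The projection $\widetilde{G} \times T \to G$ given by $(g,t) \mapsto$ (image of $g$ in $G$) kills $Z$ (embedded anti-diagonally, since the image of $(z,z^{-1})$ is the image of $z$ in $G$, which is trivial), hence descends to a surjection $G' \to G$; the kernel is $(\widetilde{G} \times T \text{ kernel})/Z = (Z \times T)/Z \cong T$ via $(z,t) \mapsto zt$ (or rather, the kernel is the image of $\ker(\widetilde{G}\to G) \times T = Z \times T$ in $G'$, which is $T$ since the anti-diagonal $Z$ already sits inside $Z\times T$). This $K \cong T$ is central in $G'$ because it is the image of the central subgroup $Z \times T$. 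Finally, the derived group of $G'$: the map $\widetilde{G} \to G'$, $g \mapsto$ class of $(g,1)$, has central (finite) kernel $Z \cap T = $ the intersection inside $\widetilde G\times T$ of the anti-diagonal $Z$ with $\widetilde G \times 1$, which is trivial; so $\widetilde{G} \hookrightarrow G'$ is a closed immersion with image a semisimple normal subgroup, and since $G'/\widetilde{G} \cong T$ is a torus, $\widetilde{G}$ is exactly the derived group $\mathscr{D}(G')$, which is simply connected by construction.

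The main obstacle — really the only subtle point — is checking that $Z = \pi_1(G)$ genuinely embeds into some torus as a \emph{scheme-theoretic} closed subgroup, not merely as an abstract group, so that the pushout $G'$ is reductive with torus kernel rather than something pathological. This is where multiplicative type is used: $\pi_1(G)$ is a finite group scheme of multiplicative type (it is a subgroup of the center of $\widetilde{G}$, which is of multiplicative type), so by Cartier duality it corresponds to a finite abelian group $M$, which surjects onto by some $\mathbb{Z}^r$, i.e.\ $\mathbb{Z}^r \twoheadrightarrow M$ dualizes to $Z \hookrightarrow D(\mathbb{Z}^r) = \mathbb{G}_m^r$, a closed immersion. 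With the embedding in hand the rest is formal; one should also remark that all the fppf quotients used exist as schemes because we quotient affine group schemes by finite flat central subgroups, so no algebraic-space subtleties arise.
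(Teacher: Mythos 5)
Your construction $(\widetilde{G}\times T)/Z$ is essentially identical to the paper's, which simply takes $T$ to be a (maximal) torus of $\widetilde{G}$ containing the central $Z$ rather than building $T$ externally by Cartier duality, and the diagonal versus anti-diagonal embedding of $Z$ is immaterial. One wording slip: a finite abelian group does not embed into $\mathbb{Z}^r$ --- what you need (and do state correctly in your final paragraph) is a surjection $\mathbb{Z}^r \twoheadrightarrow M$, which dualizes to the closed immersion $Z \hookrightarrow \mathbb{G}_m^r$.
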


\begin{proof}
  Let $\widetilde{G}$ be the simply connected cover of $G$ and let
  $\widetilde{T}$ be any torus in $\widetilde{G}$ containing the
  kernel $Z \cong \pi_1(G)$ of the covering map $\widetilde{G} \to G$.
  Then we may take $G'$ to be
  $(\widetilde{G} \times \widetilde{T}/Z)$, where $Z$ is embedded
  diagonally.  There is a natural map $G' \to G$ induced by projection
  to the first factor and the kernel of this is $\tilde{T}$ (embedded
  in $G'$ via the second factor). Moreover, the derived group of $G'$
  is equal to $\widetilde{G}$ (embedded via the first factor).  One
  gets a somewhat canonical construction by taking $\widetilde{T}$ to
  be a maximal torus
\end{proof}

\begin{remark}
  There are many choices for $G'$, e.g., for $G = \mathrm{PGL}(m)$
  (resp.~$\mathrm{PGSp}(2m)$) one may also take $G'$ to be
  $\mathrm{GL}(m)$ (resp.~$\mathrm{GSp}(2m)$).
\end{remark}

\section{The examples}\label{cordes}

\subsection{}
By constructing examples where property (L) does not hold, we show
that Theorem \ref{triv3} fails if $G$ is not assumed to be simply
connected.

Let $S$ be a smooth curve over a field of characteristic zero (for
simplicity) and let $X \to S$ a family of projective curves with a
unique singular fibre over the point $s_0 \in S$ having an ordinary
double point at the point $x_0$ over $s_0$. Locally in the \'{e}tale
topology at $x_0$, the family looks like the surface with equation
$xy - z^{n+1} = 0$, for some $n >0$, with the map given by
$(x,y,z) \mapsto z$.

We assume that the family has a section $\sigma: S \to X$ not passing
through $x_0$ and we also assume that there is a section
$\tau:S \to X$ with $\tau(s_0) = x_0$. Such families with sections can
be constructed by base changing any family as above by a suitable map
$S' \to S$, with $S'$ also a smooth curve, factoring through $X$. Note
that the local class group at $x_0$ is $\Bbb{Z}/(n+1)\Bbb{Z}$ (use the
method of proof of \cite[Example II.6.5.2]{Hartshorne}), so
$D=(n+1)\tau(S)$ is a Cartier divisor which is flat and, as is easily
seen, relatively ample over $S$. Set $U=X-D$.

Let $L=\mathcal{O}_X(\sigma(S))$; for any positive integer $m$, the
first Chern class of $L$ gives a cohomology class
$c_1(L) \in H_{et}^2(X,\mu_m)$.  Let $P$ be the
$\operatorname{PGL}(m)$ bundle on $X$ induced from the
$\operatorname{GL}(m)$-bundle $L\oplus
\mathcal{O}^{\oplus(m-1)}$.
Using the identification $\pi_1(G)= \mu_m$, it is easy to see that
$\alpha_P=c_1(L)\in H^2_{et}(X, \mu_m)$.

\begin{proposition}\label{one}
  For suitable $m$ and $n$, $P$ is not trivial on $U_{S'}$ for any
  \'{e}tale neighborhood $S'$ of $s_0$.
 \end{proposition}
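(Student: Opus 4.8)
The plan is to prove that, for a suitable choice of family and of $m,n$, the obstruction $\alpha_P=c_1(L)$ restricts to a nonzero class in $H^2_{et}(U_{S'},\mu_m)$ for \emph{every} \'etale neighbourhood $S'$ of $s_0$; since triviality of $P$ on $U_{S'}$ would force $\alpha_P$ to vanish there, this gives the Proposition. First I would make a local reduction: if $P$ were trivial on $U_{S'}$ for some \'etale neighbourhood $(S',s_0')\to(S,s_0)$, then $\alpha_P$ would vanish in $H^2_{et}(U_{S'},\mu_m)$, hence in $H^2_{et}(U_R,\mu_m)$ where $R:=\mathcal{O}_{S,s_0}^{\mathrm{sh}}$ is the strict henselization (which is also that of $\mathcal{O}_{S',s_0'}$); its residue field $k=\overline{\kappa(s_0)}$ has characteristic $0$, and I write $K=\operatorname{Frac}(R)$. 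So it suffices to show $c_1(L)\neq 0$ in $H^2_{et}(U_R,\mu_m)$.

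I would then translate this into a statement about line bundles. Since $x_0\in D$, the morphism $U\to S$ is smooth, so $U_R$ is a regular scheme, and the Kummer sequence gives an injection $\Pic(U_R)/m\hookrightarrow H^2_{et}(U_R,\mu_m)$ under which $c_1(L)$ is the class of $\mathcal{O}_X(\sigma(S))|_{U_R}$. Hence I must show that this class is \emph{not} divisible by $m$ in $\Pic(U_R)=\operatorname{Cl}(U_R)$. As $D$ has prime support $\tau(S)$ and $X_R$ is normal, $\operatorname{Cl}(U_R)=\operatorname{Cl}(X_R)/(\mathbb{Z}[\tau(S)])$. Now choose the family so that the special fibre $X_0$ is irreducible (its unique singularity being the node at $x_0$). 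Then $X_0$ is a reduced principal divisor on $X_R$, so the localization sequence for class groups shows that restriction to the generic fibre is an isomorphism $\operatorname{Cl}(X_R)\xrightarrow{\ \sim\ }\Pic(X_\eta)$; consequently
\[
\Pic(U_R)\ \cong\ \Pic(X_\eta)/(\mathbb{Z}[\tau_\eta])\ \cong\ \Pic^0(X_\eta)=J(K),\qquad J:=\operatorname{Jac}(X_\eta),
\]
and $\mathcal{O}_X(\sigma(S))|_{U_R}$ corresponds to the point $[\sigma_\eta]-[\tau_\eta]\in J(K)$.

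So everything comes down to showing that $[\sigma_\eta]-[\tau_\eta]$ is not $m$-divisible in $J(K)$, for suitable $m,n$. Here I would bring in the minimal resolution $g\colon Y\to X_R$ of the $A_n$ point, with exceptional chain $E_1,\dots,E_n$: one computes that the strict transform $\widetilde X_0$ again has self-intersection $-2$, so $Y_0=\widetilde X_0\cup E_1\cup\dots\cup E_n$ is a cycle of $n+1$ curves, $Y$ is the minimal regular model of $X_\eta$ over $\operatorname{Spec} R$, and the N\'eron model $\mathcal{N}$ of $J$ has component group $\Phi\cong\mathbb{Z}/(n+1)$. Since $R$ is strictly henselian, $J(K)=\mathcal{N}(R)$ surjects onto $\Phi(k)=\mathbb{Z}/(n+1)$, and the image of $[\sigma_\eta]-[\tau_\eta]$ is the difference of the classes of the components of $Y_0$ met by the sections $\widetilde\sigma$ and $\widetilde\tau$. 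As $\sigma$ avoids $x_0$, $\widetilde\sigma$ meets $\widetilde X_0$, whereas $\widetilde\tau$ meets some $E_j$; so the image is $\pm j \bmod (n+1)$. Choosing the family so that $\gcd(j,n+1)=1$ (automatic already for $n=1$, where $j=1$), this image generates $\mathbb{Z}/(n+1)$, so $[\sigma_\eta]-[\tau_\eta]\notin mJ(K)$ whenever $\gcd(m,n+1)>1$ (e.g.\ $n=1$, $m=2$). This contradicts the triviality assumption.

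I expect the main obstacle to be the last step: identifying the image of $[\sigma_\eta]-[\tau_\eta]$ in the component group $\Phi$ and recognising it as a generator, which uses the structure theory of N\'eron models / minimal regular models of curves together with the combinatorics of the $A_n$ configuration, and also requires knowing --- from the construction of the family --- that $\tau$ can be made to meet the exceptional locus with the right ``contact type''. The identification $\Pic(U_R)\cong J(K)$ (i.e.\ the computation of $\operatorname{Cl}(X_R)$ via the resolution) is a secondary technical point. One may instead avoid N\'eron models by taking $X_0$ \emph{reducible}, $X_0=A\cup B$ meeting only at $x_0$: a direct computation on $Y$ --- already transparent when $X_\eta\cong\mathbb{P}^1$ and $n=1$, so that $Y$ is a two-point blow-up of $\mathbb{P}^1_R$ --- then yields $\Pic(U_R)\cong\mathbb{Z}$ with $\mathcal{O}_X(\sigma(S))|_{U_R}$ a generator, giving the conclusion for every $m\ge 2$ (after checking that $D=(n+1)\tau(S)$ remains relatively ample, which it does).
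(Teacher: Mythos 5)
Your argument is correct, but it takes a genuinely different route from the paper's. The paper works globally on the minimal resolution $Q\to X$ over $S$: using the Gysin/purity sequence for $\tilde{D}\cup E\subset Q$ it reduces the claim to showing that $[\tilde{C}]$ is not in the span of $[\tilde{D}]$ and the $[E_i]$ in $H^2_{et}(Q,\mu_m)$, derives a contradiction by intersecting a putative relation with the components of the special fibre and solving the resulting linear system mod $m$, and then handles arbitrary \'etale neighbourhoods by a separate proper base change argument. You instead pass at once to the strict henselization $R$, use the Kummer sequence and the regularity of $U_R$ to convert the statement into non-$m$-divisibility of $L|_{U_R}$ in $\Pic(U_R)$, identify $\Pic(U_R)\cong J(K)$ via the localization sequence for class groups, and detect non-divisibility of $[\sigma_\eta]-[\tau_\eta]$ through the component group $\Phi\cong\mathbb{Z}/(n+1)$ of the N\'eron model. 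The two computations are cognate: your identification of the image of $[\sigma_\eta]-[\tau_\eta]$ in $\Phi$ as $\pm j$ is exactly the paper's linear system for the $\widetilde{A}_n$ intersection matrix (your $j$ is the paper's $t$), and both yield the same numerical condition, namely that $\gcd(m,n+1)$ must not divide $t$. What your version buys is conceptual clarity --- the obstruction is located in the component group of the Jacobian's N\'eron model --- and a cleaner, one-step treatment of all \'etale neighbourhoods via the strict henselization; the cost is reliance on Raynaud's theory (existence and component groups of N\'eron models, the formula for the image of a degree-zero divisor in $\Phi$), and the fact that your main line only treats the case of irreducible special fibre, with the reducible case relegated to a sketched alternative, whereas the paper handles both cases uniformly and elementarily within the same cohomological framework. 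One small point to nail down if you flesh this out: the identification $\Pic(X_\eta)/\mathbb{Z}[\tau_\eta]\cong\Pic^0(X_\eta)=J(K)$ uses that $\tau_\eta$ is a $K$-rational point of degree one (splitting the degree map), and the surjection $J(K)=\mathcal{N}(R)\to\Phi(k)$ uses henselianness of $R$ and smoothness of $\mathcal{N}$; both are fine here but should be said.
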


Let $S'$ be an  \'{e}tale neighbourhood of $s_0\in S$. By functoriality of
the boundary map \eqref{bound} Proposition \ref{one} follows from
\begin{clm}\label{two}
  For suitable $m$ and $n$, the class $c_1(L)$ restricts to a non-zero
  class in $H_{et}^2(X_{S'}-\tau(S'),\mu_m)$.
 \end{clm}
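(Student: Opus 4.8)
The plan is to reduce everything to a computation in étale cohomology of the surface $X_{S'}$ with the divisor $\tau(S')$ removed, and to pin down the relevant groups via the localization sequence together with the local structure of the node. I work étale-locally on $S$ near $s_0$, so I may assume $S' = S = \Spec R$ with $R$ a henselian (or even complete) dvr; by the proper base change theorem (after passing to the henselization or completion, and noting $\mu_m$ is finite), it suffices to show $c_1(L)$ restricts to a nonzero class on $X_{S'} - \tau(S')$, and this can be checked after passing to the strict henselization of the base and of $X$ at $x_0$.

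First I would set up the excision/localization sequence for the pair $(X_{S'},\, V)$ where $V = X_{S'} - \tau(S')$. Since $\tau(S')$ is a regular divisor in the surface $X_{S'}$ away from $x_0$, but $X_{S'}$ is singular at $x_0$, the cokernel of $H^2_{et}(X_{S'},\mu_m) \to H^2_{et}(V,\mu_m)$ and, dually, the kernel, are controlled by local cohomology supported on $\tau(S')$. The key point is that $D = (n+1)\tau(S')$ is Cartier while $\tau(S')$ itself generates the local class group $\Bbb{Z}/(n+1)$ at $x_0$: so $c_1(\mathcal{O}_X(D)) = (n+1)\,c_1(\mathcal{O}_X(\tau(S'))) $ in a suitable sense, and $c_1(\mathcal{O}_X(D))$ is killed on $V$ (being supported on $D$), but $c_1(L) = c_1(\mathcal{O}_X(\sigma(S')))$ is a class supported on a section disjoint from $\tau$, so its restriction to $V$ is controlled purely by its image in a local group at the node. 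I would compute the local contribution: at the strictly henselian local ring of $X$ at $x_0$, which looks like $k[[x,y,z]]/(xy-z^{n+1})$, the relevant local cohomology group $H^2_{\tau}$ (with $\mu_m$ coefficients) receives the Chern class and the obstruction to descending a class from $V$ is a cyclic group whose order divides $\gcd(m, n+1)$ or is exactly $\Bbb{Z}/\gcd(m,n+1)$ — this is the $A_n$-type singularity computation, where the local Brauer/cohomology group of the node is $\Bbb{Z}/(n+1)$.

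The heart of the argument is then a diagram chase exactly parallel to Lemma \ref{lem:pure}: using the Kummer sequence and $H^1_{et}(V,\mathbb{G}_m) = \Pic(V)$, the kernel of $H^2_{et}(X_{S'},\mu_m) \to H^2_{et}(V,\mu_m)$ is generated by Chern classes of line bundles on $X_{S'}$ that become trivial on $V$ — but $X_{S'}$ is \emph{not} regular, so Lemma \ref{lem:pure} does not directly apply and I cannot conclude the kernel is spanned by $c_1(\mathcal{O}(\tau(S')))$ alone; instead I get that $c_1(L)|_V$ is zero only if $L|_V$ is, modulo $m$-th powers, a tensor product of restrictions of line bundles supported set-theoretically on $\tau(S')$. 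The point is that $L = \mathcal{O}_X(\sigma(S'))$ restricted to $V$ is the line bundle of the section $\sigma$, which is still a section disjoint from $\tau$; its class in $\Pic(V)/m$ is detected by intersection with the special fiber, or more precisely by its image in $\Pic$ of the local ring at the node. I would argue that $\mathcal{O}(\sigma)$ and $\mathcal{O}(\tau)$ restrict to generators (or to specified elements) of the local class group $\Bbb{Z}/(n+1)$ at $x_0$ in such a way that $c_1(L)$ lies in the kernel on $V$ only if $m \mid$ (the order of a certain element determined by how $\sigma$ and the node interact) — and this fails for, e.g., $m = n+1$ with $\sigma$ chosen to meet the closed fiber in the component not through $\tau$.

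The main obstacle I anticipate is making the local computation at the $A_n$-singularity rigorous and explicit: identifying the local étale cohomology group $H^2$ with supports and showing that the Chern class of the section-bundle $L$ maps to a \emph{nonzero, specified} element there, rather than just showing the ambient group is nontrivial. Concretely I would localize at $x_0$, pass to the strict henselization $\widetilde{R} = k[[x,y,z]]/(xy - z^{n+1})$, use that its class group is $\Bbb{Z}/(n+1)$ generated by the Weil divisor $\{x = z = 0\}$, and track $\sigma$ and $\tau$ as divisors there; the surjectivity/injectivity inputs of Lemma \ref{lem:pure}'s proof (Grothendieck's $\mathrm{Br}$ purity) must be replaced by a direct analysis since the local ring is singular, and this is where one uses that $xy = z^{n+1}$ contributes a $\Bbb{Z}/(n+1)$ to $H^2_{et}(\text{punctured nbhd},\mu_m)$. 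Once the element $c_1(L)|_V$ is identified with a specific nonzero element of $\Bbb{Z}/\gcd(m,n+1)$ (for a suitable geometric choice of $\sigma$), taking $m$ and $n$ so that $\gcd(m,n+1) > 1$ and the element is a generator finishes the claim, hence Proposition \ref{one}.
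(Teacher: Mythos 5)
Your proposal does not follow the paper's route, and the route you choose has a gap that I do not think can be repaired as stated. The central plan --- to detect $c_1(L)|_V$ by localizing at the node $x_0$, passing to the strict henselization $k[[x,y,z]]/(xy-z^{n+1})$, and tracking $\sigma$ and $\tau$ in the local class group $\Bbb{Z}/(n+1)$ --- cannot work, because $\sigma$ is by hypothesis a section \emph{not} passing through $x_0$. Hence $L=\mathcal{O}_X(\sigma(S))$ is trivial on a Zariski neighbourhood of the node, and its image in any local (class, Brauer, or cohomology-with-supports) group at $x_0$ is zero. The nonvanishing of $c_1(L)$ on $U$ is a global phenomenon on the special fibre, not a local invariant of the $A_n$ singularity; a purely local computation at $x_0$ sees nothing of $\sigma$ at all. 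Relatedly, your suggestion to detect the class ``by intersection with the special fiber'' of $V$ does not make sense as stated, since the special fibre of $V=X-D$ is affine and carries no degree map.

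The idea your proposal is missing is the passage to the minimal resolution $g:Q\to X$. The paper identifies $U=X-D$ with $Q-(\tilde D\cup E)$, where $E=\cup_i E_i$ is the chain of $n$ exceptional $(-2)$-curves and $\tilde D$ is the strict transform of $\tau(S)$. On the \emph{smooth} surface $Q$ purity holds, so the Gysin sequence reduces the claim to showing that $[\tilde C]$ ($\tilde C$ the strict transform of $\sigma(S)$) is not in the span of $[\tilde D]$ and the $[E_i]$ in $H^2_{et}(Q,\mu_m)$. This is then ruled out by restricting a putative relation $[\tilde C]=a[\tilde D]+\sum_i b_i[E_i]$ to each proper component of $E\cup\tilde F$ ($\tilde F$ the strict transform of the fibre) and solving the resulting linear system mod $m$: the contradiction comes from the interplay between the intersection pattern of the chain, the point $E_t$ where $\tilde D$ meets it, and the fact that $\tilde C\cdot\tilde F=1$. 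This also explains why your worry about Lemma \ref{lem:pure} failing on the singular $X_{S'}$ is real but beside the point: the paper never applies purity on $X_{S'}$, only on $Q$. Your opening reduction (henselize the base, invoke proper base change) does match the paper's final subsection, which uses exactly that to show the numerical intersection conditions are not only necessary but sufficient; but without the resolution and the global intersection-theoretic system, the proof does not get off the ground.
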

 It suffices to prove this for $S = S'$, since \'{e}tale base change
 does not alter any of the properties of the family $X \to S$.

\subsection{}

The singularity of $X$ at $x_0$ is  \'{e}tale locally equivalent to
$xy -z^{n+1} = 0$, so of type $A_n$. The exceptional divisor of the
minimal resolution $Q$ consists of a chain of $n$ smooth rational
curves $E_1,E_2,\dots, E_n$, each with self-intersection $-2$ and with
$E_i$ intersecting $E_{i+1}$ transversely, $i= 1, \dots, n-1$; see
for example, \cite[III.7]{BPV}. The fibre $F$ over $s_0$, hence its
strict transform $\tilde{F}$, may or may not be irreducible; in the
latter case we write $\tilde{F_1}$ and $\tilde{F_2}$ for the
components. Locally the fibre corresponds to the curve $xy = 0$,
$z=0$, so its strict transform intersects $E_1$ and $E_n$
transversely (in a single point each). The strict transform
$\tilde{D}$ of $D = \tau(S)$ intersects the exceptional divisor in a
single point which must be smooth (since $Q$ is smooth), so lies on a
unique exceptional divisor $E_t$. Note that $t$ can be arbitrary:
if $S'$ is a smooth curve in $Q$ which intersects $E_t$ transversely
at a point $s_0'$ then $X' = S' \times_S X$ has a singular fibre over
$s_0'$ and a section which on the corresponding desingularization $Q'$
passes through $E_t'$.  By considering tangent directions, one sees
that $\tilde{D}$ does not intersect $\tilde{F}$; $F$ and $D$ become
disjoint after a single blowup.

Let $C = \sigma(S)$ and $\tilde{C}$ its strict transform in $Q$. Let
$E = \cup_i E_i$, so we have $X - D = Q- \tilde{D}\cup E$. Note that
$H^2_{ \tilde{D}\cup E, et}(Q,\mu_m)$ is freely generated by the classes of
$\tilde{D}$ and all the $E_i$.  Thus, using the Gysin sequence, it
suffices to show that the class of $\tilde{C}$ in $H_{et}^2(Q,\mu_m)$ is
not in the span of the classes of $\tilde{D}$ and the $E_i$. Assume
that we have an equation
\begin{equation}\label{class} [\tilde{C}] = a [\tilde{D}] + \sum_i
  b_i[E_i]\in H_{et}^2(Q,\mu_m)
\end{equation}
We will get a contradiction, in certain cases, by using elementary
intersection theory. Each irreducible component $G$ of $E \cup \tilde{F}$
is a proper curve, so we have maps
\[
H_{et}^2(Q, \mu_m) \to H_{et}^2(G, \mu_m) \to  \Bbb{Z}/m\Bbb{Z} ,
\]
where the first map is pullback and the second is the degree map
(which is an isomorphism). Moreover, the pullback map is compatible
with the restriction of line bundles or, equivalently, intersections
of divisors.

\subsubsection{}
Suppose $\tilde{F}$ is reducible. We may assume that $\tilde{C}$
passes through $\tilde{F}_1$,  $\tilde{F}_1$ intersects $E_1$
and $\tilde{F}_2$ intersects $E_n$. Restricting both sides of
\eqref{class} to each of $\tilde{F}_1$, $\tilde{F}_2$ and all the
$E_i$ and using the degree isomorphisms, we get $n+2$ equations in
$n+1$ unknowns. We show below that this leads to a contradiction if
$m$ does not divide $t$, which will certainly be the case if $m>n$.

\begin{itemize}
\item Intersecting \eqref{class} with $\tilde{F}_1$, we get
  $b_1=1$. Intersecting with $\tilde{F}_2$ gives $b_n=0$.
\item By induction, we may prove that if $i\leq t$, $b_i=i b_1$. These
  equations are obtained by intersecting the two sides of
  \eqref{class} by $E_1,\dots,E_{t-1}$. Therefore $b_t=tb_1$.
\item By descending induction from $i=n$, we can prove
  that $b_{i}=0$ if $i\geq t$: The case $i=n$ is already
  known. Intersecting with $E_n$ (if $t<n$) gives $b_{n-1}=2b_n$.
  Intersect \eqref{class} with $E_{i}$ (if $i>t$) to get $b_{i-1}
  -2b_i +b_{i+1}=0$, and hence $b_{i-1}=0$.
\end{itemize}
Therefore we have $b_t=tb_1=0\in \Bbb{Z}/m\Bbb{Z}$, and hence $t=0\in
\Bbb{Z}/m\Bbb{Z}$.

\subsubsection{}
Now suppose $\tilde{F}$ is irreducible.

We now intersect both sides of \eqref{class} with the classes of
$\tilde{F}$ and all the $E_i$ and then apply the degree isomorphisms,
getting $n+1$ equations in $n+1$ unknowns. As we show below these
equations imply that $t$ is a linear combination of $n+1$ and
$m$. Therefore, if the gcd of $n+1$ and $m$ does not divide $t$ (for
example, if $n+1$ divides $m$), we reach a contradiction.

\begin{itemize}
\item Intersecting \eqref{class} with $\tilde{F}$, we get
$b_1+b_n=1$.
\item By induction, we may prove that if $i\leq t$, $b_i=i b_1$. These
  equations are obtained by intersecting the two sides of
  \eqref{class} by $E_1,\dots,E_{t-1}$. Therefore $b_t=tb_1$.
\item By descending induction from $i=n$, we can prove that
  that $b_{i}=(n-i+1)b_n$ if $i\geq t$: The case $i=n$ is already
  known. Intersecting with $E_n$ (if $t<n$) gives $b_{n-1}=2b_n$.
  Intersect \eqref{class} with $E_{i}$ (if $i>t$) to get $b_{i-1}
  -2b_i +b_{i+1}=0$, and hence $b_{i-1}=(n-(i-1)-1)b_n$.
\end{itemize}
Therefore $b_t=tb_1= (n-t+1)b_n$, so $t(1-b_n)= (n-t+1)b_n$, hence
$b_n(n+1)=t$. We thus see that $t$ is a linear combination of $n+1$
and $m$.

\subsection{}

Let $X \to S$ be a family of curves as at the beginning of this
section and let $f: Q \to S$ be the minimal desingularization of
$X$. Let $S^{sh}$ be the strict henselisation of $S$ at $s_0$ and
$Q^{sh} = Q \times_S S^{sh}$. By the proper base change theorem, the
restriction map $H_{et}^2(Q^{sh}, \mu_m) = H_{et}^2(X_0, \mu_m)$ is an
isomorphism, where $X_0$ is the (geometric) fibre of $f$ over
$s_0$. Now $H_{et}^2(X_0, \mu_m)$ is a free $\Bbb{Z}/m\Bbb{Z}$-module
with basis the irreducible components of $X_0$, i.e., the $E_i$ and
$\tilde{F}$ (or $\tilde{F}_1$ and $\tilde{F}_2$). It follows from this
that for any $\alpha \in H_{et}^2(Q, \mu_m)$, if the numerical
equations obtained by intersecting both sides of an equality
\begin{equation}
  \label{class2} \alpha = a [\tilde{D}] + \sum_i
  b_i[E_i]\in H_{et}^2(Q,\mu_m)
\end{equation}
with the $E_i$ and $\tilde{F}$ (or $\tilde{F}_1$ and $\tilde{F}_2)$)
can be solved, then in fact \eqref{class2} itself can be solved (for
the pullbacks) in $H_{et}^2(Q^{sh}, \mu_m)$, so also when pulled back
to some \'{e}tale neighbourhood $S'$ of $s_0 \in S$. In particular,
this applies to $[\tilde{C}]$ as above.

\section{Acknowledgements}
We thank P.~ Solis for useful communication
(in 2016).

\begin{bibdiv}
\begin{biblist}

\bib{BPV}{book}{
    AUTHOR = {Barth, W. P.},
AUTHOR =  {Hulek, K.},
AUTHOR = {Peters, C. A. M.},
AUTHOR =  {Van de Ven, A.},
     TITLE = {Compact complex surfaces},
    SERIES = {Ergebnisse der Mathematik und ihrer Grenzgebiete. 3. Folge. A}
    VOLUME = {4},
   EDITION = {Second},
 PUBLISHER = {Springer-Verlag, Berlin},
      YEAR = {2004}
     PAGES = {xii+436},
}

\bib{BF}{article}{
AUTHOR = {Belkale, P.},
    AUTHOR=  {Fakhruddin, N.}

    TITLE = {Triviality properties of principal bundles on singular curves},
    NOTE ={arXiv:1509.06425. To appear in Algebraic Geometry},
}
\bib{DS} {article}{
    AUTHOR = {Drinfeld, V. G.},
    AUTHOR=  {Simpson, C.},
     TITLE = {{$B$}-structures on {$G$}-bundles and local triviality},
   JOURNAL = {Math. Res. Lett.},
    VOLUME = {2},
      YEAR = {1995},
    NUMBER = {6},
    PAGES = {823--829},
}


\bib{GBII}{incollection} {
    AUTHOR = {Grothendieck, Alexander},
     TITLE = {Le groupe de {B}rauer. {II}. {T}h\'eorie cohomologique},
 BOOKTITLE = {Dix {E}xpos\'es sur la {C}ohomologie des {S}ch\'emas},
     PAGES = {67--87},
 PUBLISHER = {North-Holland, Amsterdam; Masson, Paris},
      YEAR = {1968},
}

\bib{Hartshorne}{book} {
    AUTHOR = {Hartshorne, R.},
     TITLE = {Algebraic geometry},
      NOTE = {Graduate Texts in Mathematics, No. 52},
 PUBLISHER = {Springer-Verlag, New York-Heidelberg},
      YEAR = {1977},
     PAGES = {xvi+496},
}

\bib{Lipman}{article} {
    AUTHOR = {Lipman, J.},
     TITLE = {Desingularization of two-dimensional schemes},
   JOURNAL = {Ann. Math. (2)},
    VOLUME = {107},
      YEAR = {1978},
    NUMBER = {1},
     PAGES = {151--207},
}

\bib{Sol}{article}{
AUTHOR = {Solis, P.},
TITLE = {Nodal Uniformization of G-bundles},
    NOTE ={arXiv:1608.05681},
}

\end{biblist}
\end{bibdiv}
\vspace{0.05 in}

\noindent
P.B.: Department of Mathematics, University of North Carolina, Chapel Hill, NC 27599, USA\\
{{email: belkale@email.unc.edu}}

\vspace{0.08 cm}

\noindent
N.F.: School of Mathematics, Tata Institute of Fundamental Research, Homi Bhabha Road, Mumbai 400005, India\\
{{email: naf@math.tifr.res.in}}
\vspace{0.08 cm}

\end{document}